\newcommand{\R}{\mathbb{R}}
\newcommand{\C}{\mathbb{C}}
\newcommand{\N}{\mathbb{N}}
\newcommand{\Z}{\mathbb{Z}}
\newcommand{\x}{\mbox{\boldmath $x$}}
\newcommand{\y}{\mbox{\boldmath $y$}}
\definecolor{LtBlue}{cmyk}{1,0,0,0}
\newtheorem{thm}{Theorem}[section]
\newtheorem{lemma}[thm]{Lemma}
\newtheorem{ex}[thm]{Example}
\newtheorem{conj}[thm]{Conjecture}
\newtheorem{prop}[thm]{Proposition}
\newtheorem{cor}[thm]{Corollary}
\newtheorem{question}[]{Question}
\numberwithin{equation}{section}
\begin{document}

\title[Complex equiangular lines from mutually unbiased bases]{Constructions of complex equiangular lines from mutually unbiased bases}

\author{Jonathan Jedwab and Amy Wiebe}
\address{Department of Mathematics, Simon Fraser University, 8888 University Drive, Burnaby, BC, Canada, V5A 1S6}
\date{21 August 2014 (revised 4 March 2015)}
\email{jed@sfu.ca, awiebe@sfu.ca}
\thanks{J. Jedwab is supported by an NSERC Discovery Grant. A. Wiebe was supported by an NSERC Canada Graduate Scholarship.}

\begin{abstract} A set of vectors of equal norm in $\C^d$ represents equiangular lines if the magnitudes of the Hermitian inner product of every pair of distinct vectors in the set are equal. The maximum size of such a set is $d^2$, and it is conjectured that sets of this maximum size exist in $\C^d$ for every $d \geq 2$. We take a combinatorial approach to this conjecture, using mutually unbiased bases (MUBs) in the following 3 constructions of equiangular lines:
\begin{enumerate}
\item{adapting a set of $d$ MUBs in $\C^d$ to obtain $d^2$ equiangular lines in $\C^d$,}
\item{using a set of $d$ MUBs in $\C^d$ to build $(2d)^2$ equiangular lines in $\C^{2d}$,}
\item{combining two copies of a set of $d$ MUBs in $\C^d$ to build $(2d)^2$ equiangular lines in $\C^{2d}$.}
\end{enumerate}
For each construction, we give the dimensions $d$ for which we currently know that the construction produces a maximum-sized set of equiangular lines. 
\end{abstract}

\maketitle

\section{Introduction} \label{S:Intro}

\allowdisplaybreaks

Equiangular lines have been studied for over 65 years \cite{Haantjes}, and their construction remains ``[o]ne of the most challenging problems in algebraic combinatorics'' \cite{Khatirinejad}. In particular, the study of equiangular lines in complex space has intensified recently, as its importance in quantum information theory has become apparent \cite{Appleby-Clifford, Grassl-tomography, Renes, Scott-Grassl}. It is well-known that the maximum number of equiangular lines in $\C^d$ is $d^2$ \cite{DGS-bounds, Godsil-Roy}. Zauner \cite{Zauner-thesis} conjectured 15 years ago that this upper bound can be attained for all $d\geq2$. This conjecture is supported by exact examples in dimensions 2, 3 \cite{DGS-bounds, Renes}, 4, 5 \cite{Zauner-thesis}, 6 \cite{Grassl-dim6}, 7 \cite{Appleby-Clifford, Khatirinejad}, 8 \cite{Appleby-imprimitivity, Grassl-tomography, Hoggar-two, Scott-Grassl}, 9--15 \cite{Grassl-tomography, Grassl-slides, Grassl-computing}, 16 \cite{Appleby-monomial}, 19 \cite{Appleby-Clifford, Khatirinejad}, 24 \cite{Scott-Grassl}, 28 \cite{Appleby-imprimitivity}, 35 and 48 \cite{Scott-Grassl}, and by examples with high numerical precision in all dimensions $d\leq 67$ \cite{Renes, Scott-Grassl}. However, Scott and Grassl \cite{Scott-Grassl} note that ``[a]lthough our confidence in its truth has grown considerably, we seem no closer to a proof of Zauner's conjecture than Gerhard Zauner was at the time of his doctoral dissertation.''

M.~Appleby \cite{Appleby-abstract} observed in 2011: ``In spite of strenuous attempts by numerous investigators over a period of more than 10 years we still have essentially zero insight into the structural features of the equations [governing the existence of a set of $d^2$ equiangular lines in $\C^d$] which causes them to be soluble. Yet one feels that there must surely be such a structural feature \dots (one of the frustrating features of the problem as it is currently formulated is that the properties of an individual [set of $d^2$ equiangular lines in $\C^d$] seem to be highly sensitive to the dimension).'' In light of this difficulty, one of the aims of this paper is to illuminate structural features of sets of equiangular lines that are common across several dimensions.  

There are many papers addressing both the topic of maximum-sized sets of equiangular lines and that of mutually unbiased bases \cite{Appleby-prime, Appleby-monomial, Beneduci-operational, Bengtsson-Eddington, Bengtsson-KS, Greaves, Kibler, Wootters}. In 2005, Appleby \cite{Appleby-prime} even stated: ``There appear to be some intimate connections [between the study of complex equiangular lines and] the theory of mutually unbiased bases \dots''. Nonetheless, in this paper we show that there appear to be still deeper connections between these two objects than previously recognized. 

The remainder of this paper is organized as follows. In Section~\ref{S:defns} we define the major objects that we use in the rest of the paper; in Section~\ref{S:Zauner} we give an overview of the standard method of construction of equiangular lines; in Sections~\ref{S:LinesfromMUBs}--\ref{S:Lblocks} we describe three new construction methods of equiangular lines from MUBs, including examples from the dimensions $d$ for which we currently know they succeed; and in Section~\ref{S:conclusion} we give some concluding remarks.

\section{Definitions} \label{S:defns}

We now introduce the main objects of study.

A line through the origin in $\C^d$ can be represented by a nonzero vector $\x\in\C^d$ which spans it. 
A set of $m\geq 2$ distinct lines in $\C^d$, represented by vectors $\x_1,\ldots,\x_m$, is {\em equiangular} if there is some real constant $c$ such that 
\begin{equation*}  \frac{{|\langle \x_j,\x_k\rangle|}}{||\x_j||\cdot||\x_k||} = c \hspace{10pt}\mbox{ for all } j\neq k, \end{equation*}
where $\langle \x,\y\rangle$ is the standard Hermitian inner product in $\C^d$ and $||\x|| = \sqrt{|\langle \x, \x\rangle|}$ is the norm of $\x$. 
We simplify notation by always taking $\x_1,\ldots,\x_m$ to have equal norm, and then it suffices that there is a constant $a$ such that 
\begin{equation} |\langle \x_j,\x_k\rangle| = a \hspace{10pt}\mbox{ for all } j\neq k. \label{EQ:equiang2} \end{equation}
Furthermore, if each vector has unit norm, then we will refer to $|\langle \x_j,\x_k\rangle|$ as the {\em angle} between $\x_j$ and $\x_k$ (although this value is strictly the cosine of the angle). 

It is known that there can be at most $d^2$ equiangular lines in $\C^d$~\cite{DGS-bounds}. This is a specific instance of more general results obtained by Delsarte, Goethals and Seidel~\cite{DGS-bounds} using Jacobi polynomials. They found special bounds on the number of lines with a small set of angles that can exist in $\C^d$ when the angle values are specified (see~\cite[Table~I]{DGS-bounds}), as well as absolute bounds on the number of lines with a small set of angles that can exist in $\C^d$ without specifying angle values (see~\cite[Table~II]{DGS-bounds}). They also noted that if $\{\x_1,\ldots, \x_{d^2}\}$ is a set of unit vectors representing a maximum-sized set of complex equiangular lines, then the value of $a$ in (\ref{EQ:equiang2}) is determined.

\begin{prop} Let $\{\x_1,\ldots, \x_{d^2}\}$ be a set of unit vectors representing equiangular lines in $\C^d$. Then 
$$|\langle \x_j,\x_k\rangle| = \frac{1}{\sqrt{d+1}}$$
for all $j\neq k$. \label{PROP:angle}
\end{prop}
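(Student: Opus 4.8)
The plan is to translate the angle condition into a statement about the Gram matrix of the associated rank-one projections, measured in the Frobenius (Hilbert--Schmidt) inner product, and then to exploit a dimension count. First I would attach to each unit vector $\x_j$ the rank-one orthogonal projection $P_j = \x_j\x_j^*$ onto the line it spans, a $d\times d$ Hermitian matrix. On the real vector space of Hermitian matrices, equipped with the inner product $\langle A,B\rangle = \tr(AB)$, one checks the three basic identities $\tr(P_j) = 1$, $\tr(P_j^2) = 1$, and $\tr(P_jP_k) = |\langle\x_j,\x_k\rangle|^2$ for $j\neq k$; by~(\ref{EQ:equiang2}) the last quantity equals $a^2$, where $a$ is the common angle to be determined.

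The key move is to pass to the traceless parts $A_j = P_j - \tfrac1d I$. These lie in the space of traceless Hermitian matrices, which has real dimension $d^2-1$. Since we are handed $d^2$ of them, they must be linearly dependent, so their Gram matrix $G$, with entries $G_{jk} = \tr(A_jA_k)$, is singular. I would then compute $G$ explicitly from the identities above: the diagonal entries equal $1 - \tfrac1d$ and the off-diagonal entries equal $a^2 - \tfrac1d$, so that $G = (1-a^2)I + (a^2 - \tfrac1d)J$, where $I$ and $J$ are the identity and all-ones matrices of size $d^2$.

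Finally I would read off the eigenvalues of $G$: the all-ones vector gives the simple eigenvalue $1 - a^2 + (a^2 - \tfrac1d)d^2$, while its orthogonal complement gives the eigenvalue $1-a^2$ with multiplicity $d^2-1$. Because the lines are distinct, the Cauchy--Schwarz inequality gives $a < 1$, so $1 - a^2 > 0$; hence the zero eigenvalue forced by singularity must be the simple one. Setting $1 - a^2 + (a^2-\tfrac1d)d^2 = 0$ and simplifying yields $a^2(d^2-1) = d-1$, that is, $a^2 = 1/(d+1)$, as claimed.

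I do not expect a serious obstacle in the computation itself; the essential insight is the passage to the Frobenius inner product together with the dimension count for traceless Hermitian matrices. The one point that genuinely requires care is deciding \emph{which} eigenvalue of $G$ must vanish, and this is settled cleanly by the strict inequality $a<1$ coming from the distinctness of the lines, which rules out the high-multiplicity eigenvalue and pins the vanishing onto the simple one.
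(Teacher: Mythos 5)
Your argument is correct and complete: the identities $\tr(P_jP_k)=|\langle\x_j,\x_k\rangle|^2$, the reduction to the $(d^2-1)$-dimensional real space of traceless Hermitian matrices, the resulting singular Gram matrix $(1-a^2)I+(a^2-\tfrac1d)J$, and the use of $a<1$ to decide that the \emph{simple} eigenvalue is the one that vanishes are all sound, and solving $1-a^2+(a^2-\tfrac1d)d^2=0$ does give $a^2=1/(d+1)$. The paper itself, however, does not prove Proposition~\ref{PROP:angle} in the text: it reads the value off from the Delsarte--Goethals--Seidel bounds (taking $s=1$, $\varepsilon=0$ over $\C$ in \cite[Table II]{DGS-bounds}, which rest on Jacobi-polynomial machinery), and then remarks that an alternative self-contained linear-algebra proof appears in \cite[Proposition 9]{Wiebe-thesis} following Godsil's method. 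Your proof is essentially that referenced alternative, reconstructed independently. So relative to the paper's primary route you are doing something genuinely different and more elementary --- a dimension count plus an eigenvalue computation in the Frobenius inner product, rather than an appeal to the general DGS special bounds --- and what your approach buys is self-containedness; what the paper's citation buys is that the same table simultaneously yields the companion facts used elsewhere (the $d^2$ upper bound on equiangular lines and the $d+1$ upper bound on MUBs).
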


The value $1/\sqrt{d+1}$ given in Proposition~\ref{PROP:angle} can be determined by taking $s=1, \varepsilon=0$ over~$\C$ in \cite[Table II]{DGS-bounds}. An alternative self-contained proof using linear algebra is given in \cite[Proposition 9]{Wiebe-thesis}, following the method described by Godsil \cite{Godsil-slides}.

A basis for $\C^d$ is called {\em orthogonal} if the inner product of any two distinct basis elements is 0. Let $\{\x_1,\ldots, \x_d\},\{\y_1,\ldots,\y_d\}$ be two distinct orthogonal bases for $\C^d$. They are called {\em unbiased bases} if 
\begin{equation} \frac{|\langle \x_j,\y_k\rangle|}
{||\x_j||\cdot||\y_k||} = \frac{1}{\sqrt{d}} \label{EQ:MUBdef} 
\quad \mbox{for all $j,k$}.
\end{equation}
A set of orthogonal bases is a set of {\em mutually unbiased bases (MUBs)} if all pairs of distinct bases are unbiased.

\begin{ex} Consider the following orthogonal bases for $\C^2$: 
\begin{eqnarray*}
B_1 = \left\{ \begin{array}{@{\; (}cc@{)\;}}
1 & 0 \\
0 & 1
\end{array}\right\}
&
B_2 = \left\{ \begin{array}{@{\; (}cc@{)\;}}
1 & 1 \\
1 & -1
\end{array}\right\}
&
B_3 = \left\{ \begin{array}{@{\; (}cc@{)\;}}
1 & i \\
1 & -i
\end{array}\right\}.
\end{eqnarray*}
Then for $\x,\y$ in distinct bases we have 
\begin{eqnarray*}
\frac{|\langle \x,\y\rangle|}{||\x||\cdot||\y||} 
	& = & \begin{cases} \frac{1}{1\cdot\sqrt{2}} & \text{ for one of } \x,\y\in B_1\\[3pt]
		\frac{\sqrt{2}}{\sqrt{2}\cdot\sqrt{2}} & \text{ for }\x,\y\notin B_1 \end{cases} \\
	& = & \frac{1}{\sqrt{2}},
\end{eqnarray*}
satisfying $(\ref{EQ:MUBdef})$, so $\{B_1,B_2,B_3\}$ is a set of $3$ MUBs in $\C^2$. 
\label{EX:MUB}
\end{ex}

An upper bound on the number of MUBs in $\C^d$ is $d+1$ \cite[Table I]{DGS-bounds} (using $\alpha=1/d,\beta=0$ over $\C$). An alternative proof of this bound is given in \cite[Proposition 16]{Wiebe-thesis} using linear algebra, following the method described by Bandyopadhyay {\em et al.} \cite{Bandyopadhyay}.
As with equiangular lines, the central question concerning MUBs is whether this bound can be attained in all dimensions. In contrast to the situation for equiangular lines, there seems to be more doubt that this is possible. Bengtsson \cite{Bengtsson-Eddington} in 2011 observed: ``The belief in the community is that a complete set of $[d+1]$ MUB[s] does not exist for general $[d]$, while the [maximum-sized sets of equiangular lines] do.'' 
However, it is known that this upper bound for MUBs is attainable in prime power dimensions $d$ using the method of Godsil and Roy \cite{Godsil-Roy} which we follow here. 

Let $G$ be a group of order $mn$, containing a normal subgroup $N$ of order~$n$. A {\em $(m,n,k,\lambda)$-relative difference set (RDS)}\index{relative difference set}\index{RDS} in $G$ relative to $N$ is a subset $R\subset G$ of size $k$, such that the multiset $$\{r_1r_2^{-1} : r_1,r_2\in R, r_1\neq r_2\}$$ contains each element of $G\backslash N$ exactly $\lambda$ times and does not contain any elements of $N$.

\begin{ex} Let $G$ be the abelian group of order $16$ given by $\langle x\rangle\times\langle y\rangle$, with $x^4=y^4=1$. Let $N$ be the subgroup $\langle x^2\rangle\times\langle y^2\rangle$ of order $4$. Then $R =\{1,x,y,x^3y^3\}$ is a $(4,4,4,1)$-RDS in $G$ relative to $N$.
\label{EX:RDS4} \end{ex}

A {\em character} of a finite abelian group $G$ is a map $\chi:G\to\C$ which is a group homomorphism. If $G$ has order $v$, then there are $v$ characters, each of which maps the elements of $G$ to roots of unity. These characters form a group $G^*$ which is isomorphic to $G$. (See Pott \cite{Pott}, for example, for background on the use of characters to study relative difference sets.)

\begin{thm}[Theorem 4.1, \cite{Godsil-Roy}] Let $R=\{r_1,\ldots, r_d\}$ be a $(d,d,d,1)$-RDS in an abelian group $G$ relative to some subgroup of order $d$. Then the set of vectors $$\{(\chi(r_1),\ldots, \chi(r_d)) : \chi\in G^*\}$$
comprises a set of $d$ MUBs in $\C^d$.
\label{THM:RDStoMUBs} \end{thm}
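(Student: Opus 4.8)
The plan is to show directly that the $d^2$ vectors $v_\chi := (\chi(r_1),\ldots,\chi(r_d))$, indexed by the $d^2$ characters $\chi\in G^*$ (note that $|G|=d\cdot d=d^2$), split into $d$ orthogonal bases that are pairwise unbiased. Everything rests on evaluating the Hermitian inner product of two such vectors as a single character sum over $R$. First I would use that each $\chi(r_j)$ is a root of unity, so $\overline{\psi(r_j)}=\psi(r_j)^{-1}$ and $\chi(r_j)\overline{\psi(r_j)}=(\chi\psi^{-1})(r_j)$, where $\chi\psi^{-1}\in G^*$ is the character $g\mapsto\chi(g)\psi(g)^{-1}$. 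Hence, writing $\theta=\chi\psi^{-1}$ and $\theta(R):=\sum_{r\in R}\theta(r)$,
\begin{equation*}
\langle v_\chi, v_\psi\rangle=\sum_{j=1}^d\chi(r_j)\overline{\psi(r_j)}=\theta(R),
\end{equation*}
so the inner product depends only on $\theta$ (and if the conjugation convention is in the other slot, one gets $\overline{\theta(R)}$, which has the same magnitude).

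The key step is to evaluate $|\theta(R)|^2$ using the RDS hypothesis, repackaged in the group ring $\Z[G]$. Writing $R=\sum_{r\in R}r$ and $R^{(-1)}=\sum_{r\in R}r^{-1}$, and letting $G,N$ also denote the sums of all their elements, the defining property of a $(d,d,d,1)$-RDS says exactly that
\begin{equation*}
R\,R^{(-1)}=d\cdot 1_G+(G-N),
\end{equation*}
since the $d$ diagonal pairs contribute $d\cdot 1_G$ and the off-diagonal differences cover each element of $G\setminus N$ once and no element of $N$. Applying the ring homomorphism induced by $\theta$, and using $\theta(R^{(-1)})=\overline{\theta(R)}$, gives $|\theta(R)|^2=d+\theta(G)-\theta(N)$. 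Now $\theta(G)=d^2$ or $0$ according as $\theta$ is or is not the principal character $\chi_0$, and $\theta(N)=d$ or $0$ according as $\theta$ does or does not restrict trivially to $N$. Splitting into the three mutually exclusive possibilities yields
\begin{equation*}
|\theta(R)|^2=\begin{cases} d^2 & \theta=\chi_0,\\ 0 & \theta\in N^\perp\setminus\{\chi_0\},\\ d & \theta\notin N^\perp,\end{cases}
\end{equation*}
where $N^\perp=\{\theta\in G^*:\theta|_N\equiv 1\}$ is the annihilator of $N$, a subgroup of $G^*$ of order $|G|/|N|=d$.

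Finally I would translate these values back to the vectors. Taking $\chi=\psi$ (so $\theta=\chi_0$) shows every $v_\chi$ has norm $\sqrt{d}$. I then partition $G^*$ into the $d$ cosets of $N^\perp$; since $\theta=\chi\psi^{-1}\in N^\perp$ precisely when $\chi$ and $\psi$ lie in the same coset, the middle case shows that the $d$ vectors within one coset are mutually orthogonal, hence form an orthogonal basis of $\C^d$, while the third case shows that any two vectors from different cosets satisfy $|\langle v_\chi,v_\psi\rangle|/(\|v_\chi\|\,\|v_\psi\|)=\sqrt{d}/(\sqrt{d}\cdot\sqrt{d})=1/\sqrt{d}$, i.e.\ are unbiased in the sense of \eqref{EQ:MUBdef}. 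The $d$ cosets therefore constitute a set of $d$ MUBs in $\C^d$.

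The main obstacle is the second step: establishing the three-valued evaluation of $|\theta(R)|^2$. This is where the full force of the RDS definition enters, through the group-ring identity, and where one must track carefully how $\theta$ behaves simultaneously on $N$ and on $G$. By contrast, the first step (reducing the inner product to a character sum) and the third step (reading off orthogonality and unbiasedness from the coset structure of $N^\perp$) are essentially forced once the character-sum values are in hand.
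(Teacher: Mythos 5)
Your proof is correct. The paper itself states this result without proof, quoting it directly from Godsil and Roy; your argument --- reducing $\langle v_\chi,v_\psi\rangle$ to the character sum $\theta(R)$ with $\theta=\chi\psi^{-1}$, evaluating $|\theta(R)|^2$ via the group-ring identity $RR^{(-1)}=d\cdot 1_G+(G-N)$, and reading off the $d$ orthogonal bases as the cosets of the annihilator $N^\perp$ --- is essentially the standard proof from that source, and it is consistent with the paper's Example of the $4$ MUBs in $\C^4$, where the bases are exactly the cosets of $N^\perp$.
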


\begin{ex} Take $R$ to be the RDS of Example~$\ref{EX:RDS4}$. The group $G$ has $16$ characters given by $\{\chi_{j,k}: j,k\in\Z_4\}$, where $\chi_{j,k}(x) = i^j, \chi_{j,k}(y)=i^k$. Using the construction of Theorem~$\ref{THM:RDStoMUBs}$ we get the following $4$ MUBs $B_1, B_2, B_3, B_4$ in $\C^4$:
\begin{displaymath}
{\renewcommand{\arraystretch}{1.1}
\begin{array}{c|c@{(}cccc@{)\hspace{5pt}}c}
	 	&& 1 & x & y & x^3y^3 \\ \hline
	 \chi_{0,0} && 1 & 1 & 1 & 1 & \multirow{4}{*}{$\left. \rule{0pt}{32pt}\right\} B_1 $} \\
	 \chi_{0,2} && 1 & 1 & -1 & -1 \\
	 \chi_{2,0} && 1 & -1 & 1 & -1 \\
	 \chi_{2,2} && 1 & -1 & -1 & 1 \\[2pt] \hdashline[2pt/6pt]
	 \chi_{0,1} && 1 & 1 & i & -i & \multirow{4}{*}{$\left. \rule{0pt}{32pt}\right\} B_2 $} \\
	 \chi_{0,3} && 1 & 1 & -i & i \\
	 \chi_{2,1} && 1 & -1 & i & i \\
	 \chi_{2,3} && 1 & -1 & -i & -i \\[2pt] \hdashline[2pt/6pt]
	 \chi_{1,0} && 1 & i & 1 & -i & \multirow{4}{*}{$\left. \rule{0pt}{32pt}\right\} B_3 $} \\
	 \chi_{1,2} && 1 & i & -1 & i \\
	 \chi_{3,0} && 1 & -i & 1 & i \\
	 \chi_{3,2} && 1 & -i & -1 & -i \\[2pt] \hdashline[2pt/6pt]
	 \chi_{1,1} && 1 & i & i & -1 & \multirow{4}{*}{$\left. \rule{0pt}{32pt}\right\} B_4. $} \\
	 \chi_{1,3} && 1 & i & -i & 1 \\
	 \chi_{3,1} && 1 & -i & i & 1 \\
	 \chi_{3,3} && 1 & -i & -i & -1 
\end{array}
}
\end{displaymath} \label{EX:MUBs4}
\end{ex}

We note that to attain the upper bound of $d+1$ MUBs in $\C^d$ when $d$ is a prime power, we can include the standard basis with the $d$ bases obtained from Theorem~\ref{THM:RDStoMUBs}.

When $d$ is not a prime power, the smallest dimension for which a set of $d+1$ MUBs in $\C^d$ is not known is $d=6$. No one has even found 4 MUBs in $\C^6$; furthermore, the existence of sets of 3 MUBs which are provably not extendable to 4 leads some researchers to suspect that it may not be possible to find more than 3 MUBs in $\C^6$ \cite{Bengtsson-three-ways}.

\section{Zauner's Construction} \label{S:Zauner}

Zauner \cite{Zauner-thesis} was the first to conjecture that maximum-sized sets of equiangular lines exist in all dimensions. Along with this conjecture, he presented a construction for such sets of lines which has become the standard construction in the area. We give a brief overview of this construction here. 

In most of the literature regarding complex equiangular lines, maximum-sized sets of equiangular lines are constructed as the orbit of a {\em fiducial vector} under the action of a group of matrices. Zauner's thesis \cite{Zauner-thesis} describes both the group to use and where to find an appropriate fiducial vector. 

Let $\omega = e^{2\pi i/d}$ and 
$$U = \left[\begin{array}{cccccc} 
	1 & 0 & 0 & \cdots & 0  \\
	0 & \omega & 0 & \cdots & 0  \\
	0 & 0 & \omega^2 & \cdots & 0  \\
	\vdots &  & & \ddots & \vdots \vspace{2pt}\\
	0 & 0 & 0 & \cdots & \omega^{d-1}
	\end{array}\right]
	\mbox{\hspace{10pt} and \hspace{10pt}}
V = \left[\begin{array}{cccccc} 
	0 & 1 & 0 & 0 & \cdots & 0  \\
	0 & 0 & 1 & 0 & \cdots & 0  \\
	\vdots & &  & \ddots & & \vdots \\
	\\
	0 & 0 & 0 & 0 & \cdots & 1 \\
	1 & 0 & 0 & 0 & \cdots & 0
	\end{array}\right].$$
Matrices $U,V$ generate a group of matrices known as the Weyl-Heisenberg group \cite{Weyl} (see \cite{Bengtsson-Eddington, Scott-Grassl}, for example, for an overview). Modulo its center, the group is isomorphic to $\Z_d\times\Z_d$, and the elements $U^jV^k$ for $j,k\in\Z_d$ form a set of coset representatives for the center in this group.

Next define a $d\times d$ matrix $Z_u=(z_{jk})$, for $j,k\in\{0,\ldots,d-1\},$\index{$Z_u$} by 
\begin{equation*} z_{jk} = \frac{e^{\pi i(d-1)/12}}{\sqrt{d}}e^{\pi i(2jk+(d+1)k^2)/d}. \end{equation*}
Then $Z_u$ is a unitary matrix (often referred to as {\em Zauner's unitary}\index{Zauner's unitary}) which satisfies
$$Z_u^3  =  I_d $$
and normalizes the Weyl-Heisenberg group.

Zauner's full conjecture is the following:
\begin{conj} For each $d\geq 2$, there exists a set of $d^2$ equiangular lines in $\C^d$ that is constructed as 
$$\{A\x^T:A\in G\},$$
where $G$ is the Weyl-Heisenberg group and $\x^T$ is some eigenvector of $Z_u$ having eigenvalue $1$. \label{CONJ:ZaunerFull}
\end{conj}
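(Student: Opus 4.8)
The plan is to translate the conjecture into a finite system of polynomial equations, exploit the imposed $Z_u$-symmetry to shrink that system, and then argue that a solution exists for every $d$. First I would record that, modulo its center, the Weyl-Heisenberg group $G$ is carried by the $d^2$ coset representatives $\{U^jV^k : j,k\in\Z_d\}$, while the center acts on any vector by scalar multiplication. Hence for a unit vector $\x$ the orbit $\{A\x^T : A\in G\}$ represents at most $d^2$ distinct lines, with equality exactly when each $U^jV^k$ with $(j,k)\neq(0,0)$ moves $\x$ off its own line. Since $U^jV^k$ is unitary, $\langle U^{j_1}V^{k_1}\x,\,U^{j_2}V^{k_2}\x\rangle$ equals, up to a phase, $\langle \x,\,U^{j}V^{k}\x\rangle$ for a single shifted pair $(j,k)$, so by Proposition~\ref{PROP:angle} the orbit is a maximum-sized set of equiangular lines if and only if
\begin{equation*}
\bigl|\langle \x,\,U^jV^k\,\x\rangle\bigr| = \frac{1}{\sqrt{d+1}}
\qquad\text{for all }(j,k)\in(\Z_d\times\Z_d)\setminus\{(0,0)\}.
\end{equation*}
Writing the $d$ complex entries of $\x$ as $2d$ real unknowns, these $d^2-1$ magnitude conditions together with $\|\x\|=1$ form a real polynomial system, and the whole conjecture reduces to showing this system has a real solution (a \emph{fiducial vector}) for every $d\geq 2$.

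Second, I would impose the stated symmetry that $\x$ lie in the $1$-eigenspace of $Z_u$. Because $Z_u^3=I_d$, its eigenvalues are cube roots of unity, and the three eigenspace dimensions are determined by the three trace identities involving $\tr Z_u^0=d$, $\tr Z_u$ and $\tr Z_u^2$; restricting to the $1$-eigenspace cuts the free real parameters by roughly a factor of three. Since $Z_u$ normalises $G$, conjugation by $Z_u$ permutes the operators $U^jV^k$, and therefore permutes the magnitude conditions above in orbits of length dividing $3$. I would use this induced $\Z_3$-action to collapse the $d^2-1$ equations down to a much smaller set of orbit-representative equations. The payoff is a reduced system whose naive solution dimension — unknowns minus independent equations — is nonnegative, consistent with (though not a proof of) the existence of a fiducial.

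Third, rather than attack the reduced system dimension by dimension, I would pursue its arithmetic structure. The entries of all known fiducials are highly structured algebraic numbers: they appear to generate specific abelian extensions of the real quadratic field $\mathbb{Q}\bigl(\sqrt{(d+1)(d-3)}\bigr)$. I would therefore attempt to build a candidate $\x$ directly from units in the relevant ray class field, impose the Galois symmetries forced by the group of unitaries normalising $G$, and try to show that these symmetries \emph{force} the magnitude conditions to hold simultaneously — so that solvability for all $d$ would follow from an existence statement in class field theory.

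The hard part will be exactly this last step: establishing existence uniformly in $d$. The reductions of the first two paragraphs are routine and dimension-independent, but they only show that a fiducial is not obviously obstructed; they exhibit none. As Appleby notes in the Introduction, the solubility of these equations seems ``highly sensitive to the dimension,'' and no construction is known that yields a fiducial even for one infinite family of $d$, let alone all $d$. The arithmetic route is the most promising avenue, but at present it trades one open problem for another, since the required existence of the relevant Stark-type units is itself conjectural. I would not expect this plan to produce an unconditional proof without a genuinely new idea supplying the dimensional uniformity that both the polynomial and the number-theoretic pictures currently lack.
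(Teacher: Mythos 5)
You have not proved the statement, and neither does the paper: this is Zauner's conjecture, which the paper explicitly presents as an open \emph{conjecture} (supported only by exact solutions in scattered dimensions and high-precision numerics for $d\leq 67$), not as a theorem with a proof. So there is no ``paper's own proof'' to match, and any proposal must be judged on whether it closes the problem itself. Yours does not, and to your credit you say so. The concrete gap is the existence step. Your first two paragraphs are correct and standard: reducing the orbit condition to $\bigl|\langle \x,\,U^jV^k\x\rangle\bigr| = 1/\sqrt{d+1}$ for $(j,k)\neq(0,0)$, and using $Z_u^3=I_d$ together with the fact that $Z_u$ normalizes $G$ to collapse the equations into $\Z_3$-orbits, is exactly how practitioners set up the fiducial search. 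But a nonnegative count of unknowns minus equations says nothing about real solvability of a polynomial system --- a single equation such as $x^2+1=0$ over $\R$ already has positive naive dimension in two real unknowns and no solution --- so your ``consistent with existence'' observation carries no logical force, as you acknowledge.

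The third step is where the argument would have to live, and it is conditional in a way that makes the plan circular as a proof: building $\x$ from units in ray class fields of $\mathbb{Q}\bigl(\sqrt{(d+1)(d-3)}\bigr)$ presupposes an existence statement (of the relevant Stark-type units with the required Galois behavior) that is itself an open conjecture, so you would be trading Zauner's conjecture for an unproved case of Stark's. Moreover, even granting those units, you would still need to show that the Galois symmetries \emph{force} all $d^2-1$ magnitude conditions --- no mechanism for that implication is sketched, and nothing in the paper (which pursues combinatorial constructions from MUBs in fixed small dimensions, precisely because no dimension-uniform structure is known) supplies one. In short: your reductions are sound, your program is a fair summary of the most promising known attack, but the statement remains unproved, and the missing ingredient is exactly the dimensional uniformity that you correctly identify at the end.
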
 

In all dimensions where a maximum-sized set of equiangular lines is known, there is a set constructed as in Conjecture~\ref{CONJ:ZaunerFull}. Furthermore, almost all known maximum-sized sets of equiangular lines can be constructed in this way. 

Notice that Conjecture~\ref{CONJ:ZaunerFull} does not state that every eigenvector of eigenvalue 1 will produce a maximum-sized set of equiangular lines. For further details on the computational methods by which appropriate eigenvectors are found and the difficulty of doing so, we refer the reader to \cite{Grassl-tomography, Grassl-computing, Scott-Grassl}.

\begin{ex} $d=4$ \cite[p.~62]{Zauner-thesis}: 
In dimension $4$, the generators of the Weyl-Heisenberg group and Zauner's unitary are as follows:
$$U = \left[\begin{array}{@{\,}cccc@{\,}} 
	1 & 0 & 0 & 0  \\
	0 & i & 0 &  0  \\
	0 & 0 & -1 & 0  \\
	0 & 0 & 0 &  -i
	\end{array}\right],\,
V = \left[\begin{array}{@{\,}cccc@{\,}} 
	0 & 1 & 0 & 0    \\
	0 & 0 & 1 & 0    \\
	0 & 0 & 0 & 1 	\\
	1 & 0 & 0 & 0 
	\end{array}\right],\, {\renewcommand{\arraystretch}{1.5} 
Z_u = \frac{1}{2}\left[\begin{array}{@{\,}cccc@{\,}} 
	\frac{1+i}{\sqrt{2}} & -i & -\frac{1+i}{\sqrt{2}} & -i    \\
	\frac{1+i}{\sqrt{2}} & 1 & \frac{1+i}{\sqrt{2}} & -1    \\
	\frac{1+i}{\sqrt{2}} & i & -\frac{1+i}{\sqrt{2}} & i    \\
	\frac{1+i}{\sqrt{2}} & -1 & \frac{1+i}{\sqrt{2}} & 1 
	\end{array}\right].
	}$$
A fiducial vector is given by the eigenvector 
\[
\left( \begin{array}{c} x_0 \\ x_1 \\ x_2 \\ x_3 \end{array} \right) = 
\frac{1}{2\sqrt{6}}\sqrt{3-\frac{3}{\sqrt{5}}} 
\left ( \begin{array}{c} \omega+1 \\ i \\ \omega-1 \\ i \end{array} \right) + 
\frac{1}{2\sqrt{2}}\sqrt{1+\frac{3}{\sqrt{5}}} 
\left ( \begin{array}{c} 0 \\ \omega \\ 0 \\ -\omega \end{array} \right)
\]
of $Z_u$, where $\omega=e^{i\pi/4}$.
Then $16$ equiangular lines in $\C^4$ are given by
\begin{displaymath}
{\renewcommand{\arraystretch}{1.1} 
\begin{array}{@{}rrrr@{}}
(x_0 & x_1 & x_2 & x_3)\; \\
(x_0 & ix_1 & -x_2 & -ix_3)\; \\
(x_0 & -x_1 & x_2 & -x_3)\; \\
(x_0 & -ix_1 & -x_2 & ix_3)\; \\
(x_1 & x_2 & x_3 & x_0)\; \\
(x_1 & ix_2 & -x_3 & -ix_0)\; \\
(x_1 & -x_2 & x_3 & -x_0)\; \\
(x_1 & -ix_2 & -x_3 & ix_0)\; \\
(x_2 & x_3 & x_0 & x_1)\; \\
(x_2 & ix_3 & -x_0 & -ix_1)\; \\
(x_2 & -x_3 & x_0 & -x_1)\; \\
(x_2 & -ix_3 & -x_0 & ix_1)\; \\
(x_3 & x_0 & x_1 & x_2)\; \\
(x_3 & ix_0 & -x_1 & -ix_2)\; \\
(x_3 & -x_0 & x_1 & -x_2)\; \\
(x_3 & -ix_0 & -x_1 & ix_2).
\end{array}
}
\end{displaymath}
\label{EX:WH4} \end{ex}

Now we describe three new constructions of equiangular lines in $\C^d$, each of which involves a set of MUBs. 

\section{Construction 1} \label{S:LinesfromMUBs}

The underlying structure of the 16 equiangular lines in $\C^4$ of Example~\ref{EX:WH4} seems strictly tied to the Weyl-Heisenberg group and requires complicated constants.
However, Appleby {\em et al}. \cite{Appleby-monomial} recently reinterpreted Zauner's construction (as described in \S\ref{S:Zauner}), leading to a new example in dimension 4 with simpler constants. We will show how the resulting simplified set of lines, given in the following example, has additional underlying combinatorial structure.

\begin{ex} A set of $16$ vectors representing equiangular lines in $\C^4$, as constructed in \cite{Appleby-monomial}, is
\begin{displaymath}
\begin{array}{r@{\;}cccc@{\;}l}
(&\sqrt{2+\sqrt{5}} & 1 & 1 & 1&) \\
(&\sqrt{2+\sqrt{5}} & 1 & -1 & -1&) \\
(&\sqrt{2+\sqrt{5}} & -1 & 1 & -1&) \\
(&\sqrt{2+\sqrt{5}} & -1 & -1 & 1&) \\[8pt]
(&1 & 1 & i\sqrt{2+\sqrt{5}} & -i&) \\
(&1 & 1 & -i\sqrt{2+\sqrt{5}} & i&) \\
(&1 & -1 & i\sqrt{2+\sqrt{5}} & i&) \\
(&1 & -1 & -i\sqrt{2+\sqrt{5}} & -i&)\\[8pt]
(&1 & i & 1 & -i\sqrt{2+\sqrt{5}}&) \\
(&1 & i & -1 & i\sqrt{2+\sqrt{5}}&) \\
(&1 & -i & 1 & i\sqrt{2+\sqrt{5}}&) \\
(&1 & -i & -1 & -i\sqrt{2+\sqrt{5}} &)\\[8pt]
(&1 & i\sqrt{2+\sqrt{5}} & i & -1&) \\
(&1 & i\sqrt{2+\sqrt{5}} & -i & 1&) \\
(&1 & -i\sqrt{2+\sqrt{5}} & i & 1&) \\
(&1 & -i\sqrt{2+\sqrt{5}} & -i & -1 &).
\end{array} \end{displaymath}
\label{EX:d=4}
\end{ex}

This set of vectors was also found by Belovs \cite{Belovs-thesis} in 2008 via another method and was known even earlier by Zauner (unpublished notes, 2005, referenced in \cite{Appleby-monomial}).  However, here we describe another construction of this set of equiangular lines which demonstrates that the underlying structure of this set can be interpreted as a set of 4 MUBs in $\C^4$ coming from a $(4,4,4,1)$-RDS as in Theorem~\ref{THM:RDStoMUBs}. The general construction is as follows.

We exploit the already constrained angles between vectors in a set of MUBs to produce sets of equiangular lines by allowing the multiplication of a single entry in each of the $d^2$ vectors by a constant. Specifically, let $B^R_1,\ldots, B^R_d$ be $d$ MUBs in $\C^d$ formed from a $(d,d,d,1)$-RDS $R$ according to Theorem~\ref{THM:RDStoMUBs}. Let $\pi\in S_d$ be a permutation of $\{1,\ldots,d\}$ (which we represent as an ordered list of images). Let $B^R_j(\pi,v)$ denote the set of vectors formed by multiplying entry $\pi(j)$ of each vector in $B_j^R$ by $v\in\C$. Let 
\begin{equation} L^R_d(\pi,v) = \bigcup_{j=1}^d B^R_j(\pi,v).\index{$L^R_d(\pi,v)$} \label{EQ:Lblock} \end{equation} 

\begin{ex} Take $B_1^R, \ldots, B_4^R$ to be the $4$ MUBs of Example~$\ref{EX:MUBs4}$.
Let $\pi = [1,3,4,2]$ and let $v\in\C$. Then $L^R_4(\pi,v)$ consists of vectors
\begin{displaymath}
{\renewcommand{\arraystretch}{1.1}
\begin{array}{@{(}cccc@{)\hspace{5pt}}c}
	v & 1 & 1 & 1  & \multirow{4}{*}{$\left. \rule{0pt}{30pt}\right\} B^R_1(\pi,v) $} \\
	v & 1 & -1 & -1 \\
	v & -1 & 1 & -1 \\
	v & -1 & -1 & 1 \\[5pt]
	1 & 1 & iv & -i  & \multirow{4}{*}{$\left. \rule{0pt}{30pt}\right\} B^R_2(\pi,v) $} \\
	1 & 1 & -iv & i \\
	1 & -1 & iv & i \\
	1 & -1 & -iv & -i \\[5pt] 
	1 & i & 1 & -iv  & \multirow{4}{*}{$\left. \rule{0pt}{30pt}\right\} B^R_3(\pi,v) $} \\
	1 & i & -1 & iv \\
	1 & -i & 1 & iv \\
	1 & -i & -1 & -iv \\[5pt] 
	1 & iv & i & -1& \multirow{4}{*}{$\left. \rule{0pt}{30pt}\right\} B^R_4(\pi,v) $} \\
	1 & iv & -i & 1 \\
	1 & -iv & i & 1 \\
	1 & -iv & -i & -1 
\end{array}
}
\end{displaymath}
and is a set of equiangular lines when $v\in\{ \pm \sqrt{2+\sqrt{5}}$, $\pm i\sqrt{2+\sqrt{5}}\}$. In fact, for $v = \sqrt{2+\sqrt{5}}$ it is the same set of lines as given in Example~$\ref{EX:d=4}$.
\label{EX:MUBLines4} \end{ex}

\begin{thm} For $d=2,3,4$ there exists a $(d,d,d,1)$-RDS $R$, a permutation $\pi\in S_d$ and a constant $v(d)\in\C$ such that $L^R_d(\pi,v(d))$
is a set of $d^2$ equiangular lines in $\C^d$. \label{THM:1constantconstruct}
\end{thm}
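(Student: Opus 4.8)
The plan is to reduce the entire equiangularity requirement to a finite set of character-sum identities over the chosen RDS, and then to settle $d=2,3,4$ by exhibiting an explicit RDS, permutation, and constant, and verifying those identities directly. Throughout, write each vector of $B^R_j$ as $(\chi(r_1),\dots,\chi(r_d))$ for $\chi$ in the appropriate coset of characters, so that every entry is a root of unity. First I would dispose of the two easy points. Every vector of $L^R_d(\pi,v)$ has exactly one entry of modulus $|v|$ and $d-1$ entries of modulus $1$, so all $d^2$ vectors share the common norm $\sqrt{d-1+|v|^2}$; thus $(\ref{EQ:equiang2})$ is the only remaining condition. For two vectors in the \emph{same} basis $B^R_j$, with modified coordinate $p=\pi(j)$, the unmodified inner product vanishes by orthogonality, and multiplying coordinate $p$ of both vectors by $v$ changes the inner product to $(|v|^2-1)\,\overline{\chi(r_p)}\chi'(r_p)$, which has modulus exactly $|\,|v|^2-1\,|$ for \emph{every} within-basis pair. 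Hence within-basis equiangularity is automatic once $|v|$ is fixed, and since a maximum-sized set forces normalized angle $1/\sqrt{d+1}$ by Proposition~\ref{PROP:angle}, I would solve $|\,|v|^2-1\,|=(d-1+|v|^2)/\sqrt{d+1}$ to pin down $|v|^2=2+\sqrt{d+1}$, for which the common inner-product modulus becomes $|v|^2-1=1+\sqrt{d+1}$. (For $d=4$ this is $v=\sqrt{2+\sqrt5}$, matching Example~\ref{EX:d=4}.)

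The crux is the cross-basis case. Fix $\chi\in B^R_j$ and $\chi'\in B^R_{j'}$ with $j\neq j'$ and set $\psi=\chi^{-1}\chi'$, so that $\psi(r)=\overline{\chi(r)}\chi'(r)$ is again a character and $\psi(R):=\sum_{r\in R}\psi(r)$ equals the unmodified inner product. Because $\pi$ is a bijection, the modified coordinates $p=\pi(j)$ and $q=\pi(j')$ are distinct, so rescaling coordinate $p$ in the first vector and coordinate $q$ in the second changes the inner product to $\psi(R)+(\overline v-1)\psi(r_p)+(v-1)\psi(r_q)$; taking $v$ real this is $\psi(R)+(v-1)\bigl(\psi(r_p)+\psi(r_q)\bigr)$. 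Standard RDS character theory (see Pott~\cite{Pott}) gives that $\psi$ is nontrivial on $N$ and $|\psi(R)|=\sqrt d$, so the whole theorem reduces to establishing, for the chosen $R$, $\pi$, and $v=\sqrt{2+\sqrt{d+1}}$, the single family of identities
\[ \bigl|\,\psi(R)+(v-1)\bigl(\psi(r_{\pi(j)})+\psi(r_{\pi(j')})\bigr)\,\bigr| \;=\; 1+\sqrt{d+1} \]
as $(j,j')$ ranges over ordered pairs of distinct bases and $\chi,\chi'$ (equivalently $\psi$) range over the corresponding coset of the annihilator $N^{\perp}$ (the characters trivial on $N$).

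To finish, for each $d\in\{2,3,4\}$ I would take an explicit $(d,d,d,1)$-RDS (for $d=4$ the one of Example~\ref{EX:RDS4}, and analogous planar RDSs in groups of order $4$ and $9$ for $d=2,3$), build the MUBs via Theorem~\ref{THM:RDStoMUBs}, choose a permutation $\pi$ together with the real constant $v=\sqrt{2+\sqrt{d+1}}$, and then verify the displayed identity by direct computation. The verification is finite and can be organized by the $d(d-1)$ ordered pairs of bases, since for a fixed pair the difference $\psi=\chi^{-1}\chi'$ sweeps out exactly one coset of $N^{\perp}$; evaluating the $d$ resulting roots-of-unity sums and checking each has modulus $1+\sqrt{d+1}$ completes the case. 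Equiangularity with common normalized angle $1/\sqrt{d+1}<1$ also guarantees that no two of the $d^2$ vectors are parallel, so they represent $d^2$ \emph{distinct} lines, as required. The main obstacle is precisely this cross-basis verification: the norm and within-basis conditions leave no freedom in $|v|$, so everything hinges on locating a permutation $\pi$ (and, should real $v$ fail, a suitable argument for $v$) that forces every cross-basis character sum to the same modulus — and it is exactly the non-existence of such choices that prevents the argument from extending beyond $d=4$.
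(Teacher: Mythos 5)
Your overall strategy --- reduce equiangularity to a finite family of character-sum identities over the RDS and then verify them for explicit choices in $d=2,3,4$ --- is sound and is essentially a systematized version of what the paper does (its proof of Theorem~\ref{THM:1constantconstruct} consists precisely of Examples~\ref{EX:MUBLines4}, \ref{EX:MUBLines2} and \ref{EX:MUBLines3}). The within-basis analysis, the norm computation, and the identification of the cross-basis inner product as $\psi(R)+(v-1)\overline{\psi(r_{\pi(j)})}+(\overline{v}-1)\overline{\psi(r_{\pi(j')})}$ with $|\psi(R)|=\sqrt d$ are all correct.

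The genuine gap is in your determination of $|v|$. The equation $\bigl||v|^2-1\bigr|=(d-1+|v|^2)/\sqrt{d+1}$ has \emph{two} roots, $|v|^2=2+\sqrt{d+1}$ and $|v|^2=2-\sqrt{d+1}$ (the latter admissible whenever $d\le 3$); you keep only the first. This is exactly the content of the paper's Lemma following Question~\ref{Q:1constantconstruct}, which records the necessary condition as $|v|=\sqrt{2\pm\sqrt{d+1}}$. For $d=3$ the construction succeeds only with the minus root: the working constant is $v=0$ (Example~\ref{EX:MUBLines3}), corresponding to $|v|^2=2-\sqrt{4}=0$. Your prescription $|v|=\sqrt{2+\sqrt 4}=2$ leads to a dead end: with the RDS $R=\{1,y,xy^2\}$ and $\pi=[1,2,3]$, the $B_1$--$B_2$ cross-basis condition for $\psi$ trivial on the second generator forces $\cos\theta=(1\pm\sqrt{33})/8$ for $v=2e^{i\theta}$, and none of the resulting four values of $\theta$ satisfies the corresponding $B_1$--$B_3$ condition, so no argument of $v$ rescues the case (and the paper's remark that \emph{every} permutation works for $d=3$ ``with the given constant'' $v=0$ indicates that $0$ is the only valid value). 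So as written your verification step would fail for $d=3$; restoring the discarded root and taking $v(3)=0$ repairs the argument, after which your finite check goes through for all three dimensions exactly as in the paper's examples.
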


Example~\ref{EX:MUBLines4} together with the following two examples prove the theorem:

\begin{ex} $d=2:$ Take the $(2,2,2,1)$-RDS $R=\{1,x\}$ in $\langle x\rangle\cong \Z_4$ relative to $\langle x^2\rangle\cong\Z_2$, the permutation $\pi=[1,2]$ and the constant $v$. This gives a set of $2$ MUBs in $\C^2$ ($B_2,B_3$ from Example~$\ref{EX:MUB}$). Then $L_2^R(\pi,v)$ consists of vectors 
\begin{displaymath}\begin{array}{cc}
\begin{array}{@{\;(}cc@{)\hspace{3pt}}c}
v & 1 & \multirow{2}{*}{$\left. \rule{0pt}{12pt}\right\} B^R_1(\pi,v) $} \\
v & -1 \\[8pt]
1 & iv & \multirow{2}{*}{$\left. \rule{0pt}{12pt}\right\} B^R_2(\pi,v) $} \\
1 & -iv
\end{array}
\end{array}\end{displaymath}
which are equiangular for $v\in\left\{\frac{1}{2}(\sqrt{2}\pm\sqrt{6})\right.$, $-\frac{1}{2}(\sqrt{2}\pm\sqrt{6})$, $\frac{1}{2}i(\sqrt{2}\pm\sqrt{6})$, $\left.-\frac{1}{2}i(\sqrt{2}\pm\sqrt{6})\right\}$. \label{EX:MUBLines2}
\end{ex} 

\begin{ex} $d=3:$ Take the $(3,3,3,1)$-RDS $R = \{1,y,xy^2\}$ in $\langle x\rangle\times\langle y\rangle \cong\Z_3\times\Z_3$ relative to $\langle x\rangle \times \langle 1\rangle \cong \Z_3$. The resulting $3$ MUBs are as follows: 
{\renewcommand{\arraystretch}{1.1}
\begin{displaymath}
\begin{array}{c|c@{(}ccc@{)\;}l}
	 	&& 1 & y & xy^2 \\ \hline
	 \chi_{0,0} && 1 & 1 & 1 & \multirow{3}{*}{$\left.\rule{0pt}{22pt}\right\} B^R_1$} \\
	 \chi_{0,1} && 1 & \omega & \omega^2  \\
	 \chi_{0,2} && 1 & \omega^2 & \omega  \\[2pt] \hdashline[2pt/6pt]
	 \chi_{1,0} && 1 & 1 & \omega & \multirow{3}{*}{$\left.\rule{0pt}{22pt}\right\} B^R_2$} \\
	 \chi_{1,1} && 1 & \omega & 1  \\
	 \chi_{1,2} && 1 & \omega^2 & \omega^2  \\[2pt] \hdashline[2pt/6pt]
	 \chi_{2,0} && 1 & 1 & \omega^2  &  \multirow{3}{*}{$\left.\rule{0pt}{22pt}\right\} B^R_3$} \\
	 \chi_{2,1} && 1 & \omega & \omega  \\
	 \chi_{2,2} && 1 & \omega^2 & 1
\end{array}
\end{displaymath}}
where $\omega=e^{2\pi i/3}$.
Take the permutation $\pi = [1,2,3]$ and the constant $v$. Then $L_3^R(\pi,v)$ consists of vectors 
\begin{displaymath}
\begin{array}{@{(}ccc@{)\;}l}
	 v & 1 & 1  & \multirow{3}{*}{$\left.\rule{0pt}{20pt}\right\} B^R_1(\pi,v)$} \\
	 v & \omega & \omega^2   \\
	 v & \omega^2 & \omega \\[8pt]
	 1 & v & \omega  &  \multirow{3}{*}{$\left.\rule{0pt}{20pt}\right\} B^R_2(\pi,v)$} \\
	 1 & v\omega & 1   \\
	 1 & v\omega^2 & \omega^2 \\[8pt]
	 1 & 1 & v\omega^2 &   \multirow{3}{*}{$\left.\rule{0pt}{20pt}\right\} B^R_3(\pi,v)$} \\
	 1 & \omega & v\omega   \\
	 1 & \omega^2 & v 
\end{array}
\end{displaymath}
which are equiangular for $v=0$.
\label{EX:MUBLines3}
\end{ex}

It turns out that in Examples~\ref{EX:MUBLines2} ($d=2$) and \ref{EX:MUBLines3} ($d=3$), every choice of permutation $\pi\in S_d$ will produce a maximum-sized set of equiangular lines with the given constant(s) $v$. However, in Example~\ref{EX:MUBLines4} ($d=4$), the choice of permutation becomes important, as only 8 of the 24 possible permutations, namely, $[1,3,4,2]$,  $[1,4,2,3]$,  $[2,3,1,4]$,  $[2,4,3,1]$,  $[3,1,2,4]$,  $[3,2,4,1]$,  $[4,1,3,2]$, and  $[4,2,1,3]$, admit a constant $v$ which results in a set of equiangular lines. The occurrence of these permutations can be explained by the following theorem, which we state here without proof. 

\begin{thm}[Theorem 46, \cite{Wiebe-thesis}] Fix a $(4,4,4,1)$-RDS $R$ in $\Z_4\times\Z_4$. The vectors of $L^R_4(\pi,\sqrt{2+\sqrt{5}})$ comprise $16$ equiangular lines in $\C^4$ if $\pi$ is such that the inner product between each pair of distinct vectors in $L^R_4(\pi,0)$ has magnitude~$\sqrt{2}$. \label{THM:d=4_1constant} \end{thm}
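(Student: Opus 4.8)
The plan is to show that all $\binom{16}{2}$ pairwise inner products of $L^R_4(\pi,v)$, with $v=\sqrt{2+\sqrt5}$, share a common modulus, after which the equal-norm criterion~(\ref{EQ:equiang2}) finishes the argument. First I would note that each vector of $L^R_4(\pi,v)$ has three entries of modulus $1$ and one of modulus $|v|$, so all $16$ have norm $\sqrt{3+|v|^2}=\sqrt{5+\sqrt5}$; by Proposition~\ref{PROP:angle} the common modulus I am aiming for must be $\tfrac1{\sqrt5}(5+\sqrt5)=1+\sqrt5$. Writing each basis vector as $\x=(\chi(r_1),\dots,\chi(r_4))$ for a character $\chi$, a pair drawn from bases $j$ and $k$ gives $\overline{x_i}\,y_i=\phi(r_i)$ with $\phi=\psi\chi^{-1}$, which is a fourth root of unity for every $i$.

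I would then split into two cases. For a pair inside a single basis ($j=k$) the original vectors are orthogonal and the same coordinate $\pi(j)$ is scaled in both, so $\langle\x',\y'\rangle=(|v|^2-1)\phi(r_{\pi(j)})$ has modulus $|v|^2-1=1+\sqrt5$ with no hypothesis needed. For a pair from different bases ($j\neq k$, hence $\pi(j)\neq\pi(k)$) I set $p=\phi(r_{\pi(j)})$, $q=\phi(r_{\pi(k)})$, $s=p+q$, and let $t$ be the sum of the two remaining terms $\phi(r_i)$, so that $w:=s+t=\langle\x,\y\rangle$. Because $v$ is real, expanding the inner product gives the clean identity $\langle\x',\y'\rangle=vs+t$. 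Evaluating at $v=0$ returns exactly $t$, so the hypothesis on $L^R_4(\pi,0)$ reads $|t|=\sqrt2$, while unbiasedness of the MUBs (Theorem~\ref{THM:RDStoMUBs}) gives $|w|=2$. At this point the whole theorem reduces to the single claim $|s|^2=2$: granting it, $|w|^2=|s|^2+|t|^2+2\,\mathrm{Re}(s\bar t)=4$ forces $\mathrm{Re}(s\bar t)=0$, and then $|vs+t|^2=v^2|s|^2+2=2v^2+2=(1+\sqrt5)^2$, as required.

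The hard part will be proving this claim, $|s|=|\phi(r_{\pi(j)})+\phi(r_{\pi(k)})|=\sqrt2$, and this is exactly where the fourth-root structure does the work. Since $p,q$ are fourth roots of unity, $\mathrm{Re}(p\bar q)\in\{-1,0,1\}$, so $|s|^2=2+2\,\mathrm{Re}(p\bar q)\in\{0,2,4\}$ and $|s|\in\{0,\sqrt2,2\}$. Combining $|w|^2=4$ with $|t|^2=2$ yields $|s|^2+2\,\mathrm{Re}(s\bar t)=2$; the case $|s|=0$ violates this outright, and $|s|=2$ would force $\mathrm{Re}(s\bar t)=-1$. I would rule out the last case by parity: $|s|=2$ forces $p=q$, whence $s\bar t=2p\bar t$ and $\mathrm{Re}(s\bar t)=2\,\mathrm{Re}(p\bar t)$ is even, contradicting $-1$. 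Therefore $|s|=\sqrt2$, closing the reduction.

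One subtlety I would flag is the reading of the hypothesis: inside a single basis the inner products of $L^R_4(\pi,0)$ are forced to have modulus $1$ rather than $\sqrt2$, so the stated condition is genuinely a condition on the cross-basis pairs, which is precisely where it is invoked above. The within-basis pairs contribute the target modulus $1+\sqrt5$ unconditionally, and the only external inputs are the unbiasedness $|w|=2$ from Theorem~\ref{THM:RDStoMUBs} and the fact that $v=\sqrt{2+\sqrt5}$ is real, which makes the coefficients of $p$ and $q$ coincide.
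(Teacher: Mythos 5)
The paper states this theorem explicitly ``without proof,'' deferring to Theorem~46 of the cited thesis, so there is no in-paper argument to compare yours against. On its own merits your proof is correct and complete. The within-basis computation giving modulus $|v|^2-1=1+\sqrt{5}$, the identity $\langle \x',\y'\rangle = vs+t$ (valid because $v$ is real, so the conjugate on the $\pi(k)$ term does nothing), and the arithmetic forcing $|s|^2=2$ from the integrality of real parts of sums of fourth roots of unity all check out; I verified the conclusion $|vs+t|^2 = 2v^2+2 = (1+\sqrt{5})^2$ against Example~\ref{EX:MUBLines4}. Your flag about the hypothesis is well taken: read literally, the condition on $L^R_4(\pi,0)$ fails for same-basis pairs (those inner products have magnitude $1$), so the theorem is only non-vacuous under your reading that the $\sqrt{2}$ condition concerns cross-basis pairs --- which is exactly where your argument uses it. Two trivial points you could add for completeness: the $16$ vectors represent $16$ \emph{distinct} lines because no two can be proportional (a proportional pair would have inner product of magnitude $5+\sqrt{5}\neq 1+\sqrt{5}$), and the crucial fact that every $\phi(r_i)$ is a fourth root of unity comes from the theorem fixing $R$ inside $\Z_4\times\Z_4$.
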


Theorem~\ref{THM:d=4_1constant} suggests that as we increase the dimension $d$ there may be additional restrictions on when this construction produces maximum-sized sets of equiangular lines, which leads us to ask the following question:

\begin{question} Does the construction used in Theorem~$\ref{THM:1constantconstruct}$ produce maximum-sized sets of equiangular lines for some $d>4$? \label{Q:1constantconstruct} \end{question}

And more generally, 

\begin{question} Can we transform a set of $d$ MUBs in $\C^d$ into $d^2$ equiangular lines in $\C^d$ through multiplication by constants? \label{Q:MUBstoLines} \end{question}

Conversely, we wonder if there is a complementary construction to the one we have described, which could be used to extract MUBs from maximum-sized sets of equiangular lines other than those given in Examples~\ref{EX:MUBLines4}, \ref{EX:MUBLines2} and \ref{EX:MUBLines3}.


\begin{question} Can we transform a set of $d^2$ equiangular lines in $\C^d$ into $d$ MUBs in $\C^d$ through multiplication by constants? \label{Q:LinestoMUBs} \end{question}

We now derive a necessary condition that could assist in answering Question~\ref{Q:1constantconstruct}.
We observe that the construction used in Theorem~$\ref{THM:1constantconstruct}$ will always produce {\em almost flat} vectors; that is, vectors having all but one entry of equal magnitude. Furthermore, if this set of vectors represents a set of equiangular lines, then the following lemma determines the single exceptional magnitude:

\begin{lemma} Let $R$ be a $(d,d,d,1)$-RDS in an abelian group. Let $\pi\in S_d$ and $v\in\C$. Suppose that $L^R_d(\pi,v)$ is a set of $d^2$ equiangular lines. Then the magnitude of $v$ is $\sqrt{2\pm\sqrt{d+1}}$. \end{lemma}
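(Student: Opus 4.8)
The plan is to extract the value of $|v|$ using only the inner products between pairs of vectors lying in the \emph{same} modified basis, together with Proposition~\ref{PROP:angle}; this deliberately sidesteps the permutation-dependent cross-basis inner products (the quantities governed by Theorem~\ref{THM:d=4_1constant}), which are not needed here. First I would record the norms. By Theorem~\ref{THM:RDStoMUBs} every entry of every vector in $B^R_1,\ldots,B^R_d$ is a root of unity, hence of magnitude $1$; forming $L^R_d(\pi,v)$ multiplies exactly one entry of each vector by $v$, so every vector of $L^R_d(\pi,v)$ has one entry of magnitude $|v|$ and $d-1$ entries of magnitude $1$. Thus all $d^2$ vectors share the common squared norm $N^2=(d-1)+|v|^2$. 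Since by hypothesis $L^R_d(\pi,v)$ is a maximum-sized set of $d^2$ equiangular lines, Proposition~\ref{PROP:angle}, applied to the unit vectors obtained by dividing each vector by $N$, forces the common inner-product magnitude of the unnormalized vectors to be
\[ a=\frac{N^2}{\sqrt{d+1}}=\frac{(d-1)+|v|^2}{\sqrt{d+1}}. \]

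Next I would compute the inner product of two distinct vectors $u,w$ lying in the same block $B^R_j(\pi,v)$; these arise by rescaling coordinate $p=\pi(j)$ of two orthogonal vectors of $B^R_j$ by $v$. Starting from $\sum_m u_m\overline{w_m}=0$ and isolating coordinate $p$, the rescaling replaces the term $u_p\overline{w_p}$ by $|v|^2u_p\overline{w_p}$, so that the modified inner product collapses to $(|v|^2-1)\,u_p\overline{w_p}$. Because $u_p$ and $w_p$ are roots of unity, $|u_p\overline{w_p}|=1$, and hence every within-block pair has inner-product magnitude exactly $\bigl||v|^2-1\bigr|$. The essential point is that this value is independent of $j$, of the permutation $\pi$, and of the chosen pair, which is precisely why these pairs alone suffice to pin down $|v|$.

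Finally I would equate the two expressions. Writing $t=|v|^2\ge 0$ and $s=\sqrt{d+1}$ (so $d-1=s^2-2$) gives $|t-1|=(s^2-2+t)/s$. Clearing the absolute value in its two cases produces linear equations whose numerators factor as $(s-1)(s+2)$ and $-(s+1)(s-2)$; these cancel against the factors $s-1$ and $s+1$ to leave $t=s+2$ and $t=2-s$, that is, $|v|^2=2\pm\sqrt{d+1}$, as claimed. I expect the only genuine work to be the within-block inner product computation, where the orthogonality of the original MUB must be used to annihilate every coordinate except the modified one; the solving step is routine, and the two sign cases are harmless since the statement records both roots $2\pm\sqrt{d+1}$ (the minus branch requiring $t\ge 0$, hence $d\le 3$, consistent with the values $v$ in Examples~\ref{EX:MUBLines2} and~\ref{EX:MUBLines3}).
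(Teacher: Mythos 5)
Your proposal is correct and follows essentially the same route as the paper's own proof: restrict attention to pairs within a single modified basis, observe that orthogonality reduces their inner product to $(|v|^2-1)$ times a root of unity, compute the common norm $\sqrt{d-1+|v|^2}$, and apply Proposition~\ref{PROP:angle} to solve for $|v|^2=2\pm\sqrt{d+1}$. Your added remarks on the factorization of the two sign cases and on the nonnegativity constraint for the minus branch are correct but not needed for the statement.
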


\begin{proof} 
Let $\x,\y$ be distinct lines of $L^R_d(\pi,v)$ originating from the same basis. Since the original vectors are orthogonal, the inner product of $\x$ and $\y$ is $\xi(|v|^2-1)$ for some root of unity $\xi$. Each of $\x$ and $\y$ has norm $\sqrt{d-1+|v|^2}$. By Proposition~\ref{PROP:angle}, we must have 
$$
\frac{1}{\sqrt{d+1}} = \frac{|\langle \x,\y\rangle|}{||\x||\cdot||\y||} = \frac{\big||v^2|-1\big|}{{d-1+|v|^2}},
$$
which is easily solved to find $|v|^2 = 2\pm\sqrt{d+1}$. 
\end{proof}

Though this construction is notably different than Zauner's, we observe that having an exceptional magnitude of $\sqrt{2\pm\sqrt{d+1}}$ is equivalent to the necessary condition given in \cite[6.4.1.\ Lemma]{Roy-thesis} for constructing maximum-sized sets of almost flat equiangular lines using Zauner's construction. 

As a final observation about this construction, notice that MUBs formed as in Theorem~\ref{THM:RDStoMUBs} have elements all of whose entries lie on the complex unit circle. Thus if we take a single basis from the set and write its elements as the rows of a matrix $H$, then this matrix will satisfy the equation
\begin{equation} HH^\dagger = dI_d \end{equation}
(where $H^\dagger$ is the conjugate transpose of $H$), which means $H$ is a complex Hadamard matrix of order $d$. 
From this observation, we can consider the single Hadamard matrix construction of \cite{Jedwab-Wiebe-simple} for 64 equiangular lines in $\C^8$ as an example of a construction relying on MUBs. This now links the construction of maximum-sized sets of equiangular lines in dimensions $2,3,4$ and $8$ via MUBs.

\section{Construction 2}\label{S:d8}

We now examine another example of a maximum-sized set of equiangular lines whose connection to MUBs has not previously been recognized. 

One of the first dimensions for which a set of $d^2$ equiangular lines in $\C^d$ was discovered was $d=8$. In  1981, Hoggar gave a construction for 64 lines as the (complexified vectors associated to the) diameters of a quaternionic polytope~\cite{Hoggar-two}. (See \cite{Hoggar-t-designs, Hoggar-64} for more details on this construction.)
 
This set of lines was reinterpreted by Zauner~\cite{Zauner-thesis}. It was also reinterpreted by Godsil and Roy~\cite{Godsil-Roy} using the following variation of Zauner's construction  method of \S\ref{S:Zauner}.

Recall that in dimension 2, the Weyl-Heisenberg group has generators 
$$U = \left[\begin{array}{cc} 1& 0 \\ 0 & -1\end{array}\right], \hspace{20pt}
V = \left[\begin{array}{cc} 0 & 1 \\ 1 & 0\end{array}\right] $$
and a set of coset representatives for its center is $\{I_4,U,V,UV\}$. Now consider the 3-fold tensor product $G$ of this set: it has 64 elements given by $G = \{A\otimes B\otimes C: A,B,C\in \{I_4,U,V,UV\}\}$ with each element being an $8\times 8$ matrix. Let 
$$\x = \left(0,0, \frac{1+i}{\sqrt{2}}, \frac{1-i}{\sqrt{2}}, \frac{1+i}{\sqrt{2}},- \frac{1+i}{\sqrt{2}},0,\sqrt{2}\right).$$
Then 64 equiangular lines in $\C^8$ are given by $\{A\x^T : A\in G\}$.

There are several operations that map a set of equiangular lines to an equivalent set of equiangular lines, including:
\begin{enumerate}
\item{permuting the entries of each vector according to the same permutation,}
\item{multiplying all entries of a single vector by a complex constant of magnitude 1,}
\item{multiplying the same entry of each vector by a complex constant of magnitude 1.}
\end{enumerate}
Under a suitable combination of these operations, we can transform Hoggar's 64 lines (as interpreted above) into a form that exposes a new link with MUBs. 

\begin{ex}  Let $B_1,B_2,B_3,B_4$ be the $4$ MUBs of Example~$\ref{EX:MUBs4}$. Let $C_j$ be the $4\times 4$ matrix with $-1+i$ in the $j$-th column and zeros elsewhere. Then Hoggar's $64$ lines in $\C^8$ are equivalent to
\begin{displaymath}
\begin{array}{@{[}cc@{]\hspace{15pt}[}cc@{]\hspace{15pt}[}cc@{]\hspace{15pt}[}cc@{]}}
B_1 & C_1 & B_1 & -C_1 & -C_1 & B_1 & C_1 & B_1 \\
B_2 & C_2 & B_2 & -C_2 & -C_2 & B_2 & C_2 & B_2 \\
B_3 & C_3 & B_3 & -C_3 & -C_3 & B_3 & C_3 & B_3 \\
B_4 & C_4 & B_4 & -C_4 & -C_4 & B_4 & C_4 & B_4 \\
\end{array}
\end{displaymath}
where $[B\;\,C]$ represents the set of vectors which are the concatenation of corresponding vectors in $B$ and $C$. 
\label{EX:Hoggartwist} \end{ex}

In Example~\ref{EX:Hoggartwist} we can see how the 4 MUBs of Example~\ref{EX:MUBs4} are embedded in a set of equiangular lines equivalent to Hoggar's. In this way, we can view Example~\ref{EX:Hoggartwist} as constructing 64 equiangular lines in $\C^8$ from 4 MUBs in $\C^4$. In fact, it is just one instance of the following construction of a one-parameter family of sets of 64 equiangular lines in $\C^8$. 

Let $a$ be a real parameter. Let $C_j(a)$ be the $4\times 4$ matrix with $\frac{a-1+i(a+1)}{\sqrt{1+a^2}}$ in column $j$ and zeros elsewhere and $D_j(a)$ be the $4\times 4$ matrix with $\frac{a+1+i(a-1)}{\sqrt{1+a^2}}$ in column $j$ and zeros elsewhere. Then one can verify using a computer algebra system, or even by hand, that the following vectors are a set of 64 equiangular lines in $\C^8$ for all values of $a\in\R$:
\begin{displaymath}
\begin{array}{@{[}cc@{]\hspace{15pt}[}cc@{]\hspace{15pt}[}cc@{]\hspace{15pt}[}cc@{}l}
B_1 & C_1(a) & B_1 & -C_1(a) & D_1(a) & B_1 & -D_1(a) & B_1&] \\
B_2 & C_2(a) & B_2 & -C_2(a) & D_2(a) & B_2 & -D_2(a) & B_2&] \\
B_3 & C_3(a) & B_3 & -C_3(a) & D_3(a) & B_3 & -D_3(a) & B_3&] \\
B_4 & C_4(a) & B_4 & -C_4(a) & D_4(a) & B_4 & -D_4(a) & B_4&]. \\
\end{array}
\end{displaymath}
The lines of Example~\ref{EX:Hoggartwist} can be obtained by setting $a=0$.

Notice that in contrast to the construction of \S\ref{S:LinesfromMUBs}, this construction involves a change of dimension, namely using MUBs in $\C^4$ to construct equiangular lines in $\C^8$. It is then natural to ask the following question:
\begin{question} Can we construct lines having similar form to those in Example~$\ref{EX:Hoggartwist}$ in dimensions other than $8$? \label{Q:Hoggar} \end{question}
And more generally,
\begin{question} Can we construct $D^2$ equiangular lines in $\C^D$ from $d$ MUBs in $\C^d$ for $D\neq d$? \label{Q:MUBstoLines2}\end{question}

\section{Construction 3}\label{S:Lblocks}

In our final construction, we suggest another approach to answering Question~\ref{Q:MUBstoLines2}. We will show how to join together blocks of the form $L_d^R(\pi,v)$ as constructed in \S\ref{S:LinesfromMUBs} to give a set of $d^2$ equiangular lines in $\C^{2d}$ for infinitely many values of $d$. Furthermore, we will show how in dimension 8 we can extend this set of 16 lines to a maximum-sized set of 64 equiangular lines. 

As described in \S\ref{S:LinesfromMUBs}, the vectors in $L^R_d(\pi,v)$ are derived from a set of $d$ MUBs constructed from a $(d,d,d,1)$-RDS in an abelian group. Write $[L^R_d(\pi,v) \hspace{5pt} L^R_d(\pi,v')]$ for the set of vectors in which each vector is the concatenation of corresponding vectors in $L^R_d(\pi,v)$ and $L^R_d(\pi,v')$.

\begin{ex} 
$d=4:$\! Take $R$ to be the RDS of Example~$\ref{EX:RDS4}$. Let $\pi = [1,3,4,2]$, and let $v,v'\in\C$. Construct $L^R_4(\pi,v)$ and $L^R_4(\pi,v')$ as in $(\ref{EQ:Lblock})$ (see Example~$\ref{EX:MUBLines4}$). Then
\begin{displaymath}
[L^R_4(\pi,v)\hspace{5pt} L^R_4(\pi,v')] = \left\{
\begin{array}{@{\;(}cccccccc@{)\;}}
v & 1 & 1 & 1 & v' & 1 & 1 & 1 \\
v & 1 & -1 & -1 & v' & 1 & -1 & -1 \\
v & -1 & 1 & -1 & v' & -1 & 1 & -1 \\
v & -1 & -1 & 1 & v' & -1 & -1 & 1 \\[8pt]
1 & 1 & iv & -i & 1 & 1 & iv' & -i \\
1 & 1 & -iv & i & 1 & 1 & -iv' & i \\
1 & -1 & iv & i & 1 & -1 & iv' & i \\
1 & -1 & -iv & -i & 1 & -1 & -iv' & -i\\[8pt]
1 & i & 1 & -iv & 1 & i & 1 & -iv' \\
1 & i & -1 & iv & 1 & i & -1 & iv' \\
1 & -i & 1 & iv & 1 & -i & 1 & iv' \\
1 & -i & -1 & -iv & 1 & -i & -1 & -iv' \\[8pt]
1 & iv & i & -1 & 1 & iv' & i & -1 \\
1 & iv & -i & 1 & 1 & iv' & -i & 1 \\
1 & -iv & i & 1 & 1 & -iv' & i & 1 \\
1 & -iv & -i & -1 & 1 & -iv' & -i & -1 
\end{array} \right\}. \end{displaymath}
\label{EX:2L4blocks}
\end{ex}

\begin{lemma} Let $R$ be a $(d,d,d,1)$-RDS in an abelian group, let $\pi\in S_d$ and let $a,b\in\R$. Then all inner products between distinct vectors of the $d^2$ vectors of $[L^R_{d}(\pi,a+ib) \hspace{5pt} L^R_d(\pi,2-a-ib)]$ in $\C^{2d}$ have magnitude ${2}({b^2+(a-1)^2})$ or $2\sqrt{d}$. \label{LEM:2Lblockangles} \end{lemma}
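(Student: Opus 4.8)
The plan is to split the statement into two cases according to whether the two concatenated vectors arise from the same basis $B^R_j$ or from different bases, and in each case to exploit the fact that, by Theorem~\ref{THM:RDStoMUBs}, every entry of every underlying MUB vector is a root of unity. Write $v=a+ib$ and $v'=2-a-ib$, and note the single algebraic fact that drives the whole argument: $v+v'=2$. For a MUB vector $\x$ whose scaled coordinate is $\pi(j)$, let $\x^{(w)}$ denote the vector of $B^R_j(\pi,w)$ obtained from $\x$; then each of the $d^2$ concatenated vectors has the form $[\x^{(v)}\;\x^{(v')}]$, and the Hermitian inner product splits additively over the two halves,
\[
\langle [\x^{(v)}\;\x^{(v')}],\,[\y^{(v)}\;\y^{(v')}]\rangle
= \langle \x^{(v)},\y^{(v)}\rangle + \langle \x^{(v')},\y^{(v')}\rangle .
\]
First I would record that each MUB vector, having all entries of modulus $1$, has norm $\sqrt{d}$, and that these vectors satisfy orthogonality within a basis and unbiasedness across bases.

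In the same-basis case ($\x,\y$ distinct vectors of a single $B^R_j$) I would use $\langle\x,\y\rangle=0$ to collapse each summand. Scaling only the one coordinate $\pi(j)$ by $w$ changes the inner product from $0$ to $(|w|^2-1)\xi$, where $\xi=\overline{x_{\pi(j)}}\,y_{\pi(j)}$ is a root of unity independent of $w$. Summing over $w\in\{v,v'\}$ gives $(|v|^2+|v'|^2-2)\,\xi$, and a direct computation yields $|v|^2+|v'|^2-2 = 2\big((a-1)^2+b^2\big)$; since $|\xi|=1$, this produces the first claimed magnitude.

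In the different-basis case ($\x\in B^R_j$, $\y\in B^R_k$ with $j\neq k$, so the scaled coordinates $\pi(j)\neq\pi(k)$) I would write $\langle\x^{(w)},\y^{(w)}\rangle = S + (\bar w-1)\alpha + (w-1)\beta$, where $S=\langle\x,\y\rangle$ has modulus $\sqrt{d}$ by unbiasedness, and $\alpha,\beta$ are the roots of unity sitting in coordinates $\pi(j),\pi(k)$. Summing over $w\in\{v,v'\}$, the correction terms acquire the coefficients $\bar v+\bar{v'}-2$ and $v+v'-2$, both of which vanish because $v+v'=2$ is real. Hence the total inner product equals $2S$, of magnitude $2\sqrt{d}$, giving the second claimed value.

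The step I expect to be the crux is this final cancellation: the construction is engineered precisely so that, across two distinct bases, the two off-diagonal corrections annihilate, and this happens exactly when $v'=2-v$. The same-basis computation is routine algebra once orthogonality and the root-of-unity property are invoked. I would close by observing that the two cases are exhaustive and mutually exclusive, since distinct MUB vectors yield distinct concatenated vectors, so every pair of distinct vectors among the $d^2$ falls into exactly one case.
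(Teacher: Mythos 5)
Your proof is correct and follows essentially the same route as the paper's: the same two-case split (same basis versus distinct bases), the orthogonality collapse giving $\xi(|v|^2+|v'|^2-2)$ in the first case, and the cancellation of the correction terms via $v+v'=2$ in the second. The only differences are cosmetic (a swapped inner-product convention and slightly more abstract bookkeeping of the coefficients).
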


\begin{proof} 
We consider two cases, according to whether distinct vectors of $L_d^R(\pi,v)$ originate from the same basis or from distinct bases. 

In the first case, consider the inner product of distinct vectors of $L^R_d(\pi,v)$ constructed from vectors in the same basis $B^R_j$. Since the original vectors are orthogonal, this inner product is $\xi(|v|^2-1)$ for some root of unity~$\xi$. When $v=a+ib$, the inner product becomes $\xi(a^2+b^2-1)$ and when $v=2-a-ib$ it becomes $\xi((2-a)^2+b^2-1)$. Thus the corresponding concatenated vectors have inner product $\xi(a^2+b^2-1+(2-a)^2+b^2-1)$, which has magnitude ${2}({b^2+(a-1)^2})$. 

In the second case, consider vectors of $L^R_d(\pi,v)$ constructed from vectors in distinct bases $B^R_j, B^R_k$ . Let these constructed vectors be given by 
\begin{equation}
\begin{array}{rcccccccc}
\x & = & (x_{1} & x_2 & \ldots & \ldots & vx_{\pi(j)} & \ldots & x_d)\\
\y & = & (y_{1} & y_2 & \ldots & vy_{\pi(k)} & \ldots & \ldots & y_d). 
\end{array}
\end{equation}
When $v=1$, by construction all of the entries $x_\ell$, $y_\ell$ are roots of unity (see Theorem~\ref{THM:RDStoMUBs}) and so each vector has norm $\sqrt{d}$. Therefore the inner product $\sum_{\ell=1}^d x_{\ell}\overline{y_\ell}$ of these vectors when $v=1$ has magnitude $\sqrt{d}$, by (\ref{EQ:MUBdef}).
Now the inner product of $\x$ and $\y$ in $L^R_d(\pi,v)$ is 
\begin{equation*}
\begin{split} x_1\overline{y_1} + \cdots + vx_{\pi(j)}\overline{y_{\pi(j)}}+\cdots+\overline{v}x_{\pi(k)}\overline{y_{\pi(k)}}+\cdots+x_d\overline{y_d}  \\
= \sum_{\ell=1}^d x_{\ell}\overline{y_\ell} + (v-1)x_{\pi(j)}\overline{y_{\pi(j)}}+(\overline{v}-1)x_{\pi(k)}\overline{y_{\pi(k)}}.
\end{split}
\end{equation*}
This means that the corresponding concatenated vectors have inner product 
\begin{equation*}\begin{split}
\sum_{\ell=1}^d x_{\ell}\overline{y_\ell} + (a+ib-1)x_{\pi(j)}\overline{y_{\pi(j)}}+(a-ib-1)x_{\pi(k)}\overline{y_{\pi(k)}}\hspace{60pt} \\
+\;\sum_{\ell=1}^d x_{\ell}\overline{y_\ell} + (2-a-ib-1)x_{\pi(j)}\overline{y_{\pi(j)}}+(2-a+ib-1)x_{\pi(k)}\overline{y_{\pi(k)}} \\
=2\sum_{\ell=1}^d x_{\ell}\overline{y_\ell}.
\end{split} \end{equation*}
Since $\left|\sum_{\ell=1}^d x_{\ell}\overline{y_\ell}\right|=\sqrt{d}$, the concatenated vectors have inner product of magnitude $2\sqrt{d}$.
\end{proof}

\begin{cor} Let $R$ be a $(d,d,d,1)$-RDS in an abelian group, and let $\pi\in S_d$.  Then there are infinitely many choices of $a,b \in \R$ so that the vectors of $[L^R_{d}(\pi,a+ib) \hspace{5pt} L^R_d(\pi,2-a-ib)]$ form a set of $d^2$ equiangular lines in $\C^{2d}$. \label{COR:Lblocks} \end{cor}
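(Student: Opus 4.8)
The plan is to obtain the corollary as an immediate consequence of Lemma~\ref{LEM:2Lblockangles}. That lemma tells us that, for every real pair $a,b$, the $d^2$ concatenated vectors of $[L^R_d(\pi,a+ib)\;L^R_d(\pi,2-a-ib)]$ realize only two inner-product magnitudes between distinct vectors: the value $2(b^2+(a-1)^2)$ from pairs built out of the same basis, and the value $2\sqrt{d}$ from pairs built out of distinct bases. So to force equiangularity I would simply require these two magnitudes to agree.

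First I would confirm that all $d^2$ concatenated vectors share a common norm, so that the simplified equiangular criterion~(\ref{EQ:equiang2}) applies. Each vector of $L^R_d(\pi,v)$ has $d-1$ entries that are roots of unity together with one entry of magnitude $|v|$, hence squared norm $d-1+|v|^2$; consequently a concatenated vector has squared norm $(d-1+|a+ib|^2)+(d-1+|2-a-ib|^2)$, which does not depend on the chosen pair of basis vectors. Equiangularity therefore reduces to the single scalar equation
\[
2\bigl(b^2+(a-1)^2\bigr)=2\sqrt{d}, \qquad\text{that is,}\qquad b^2+(a-1)^2=\sqrt{d}.
\]
This is the equation of a circle in the $(a,b)$-plane centred at $(1,0)$ with radius $d^{1/4}>0$; it has infinitely many real solutions, and each one gives a value $v=a+ib$ for which every pair of distinct concatenated vectors has inner product of the common magnitude $2\sqrt{d}$.

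It remains to check that these $d^2$ vectors really span $d^2$ \emph{distinct} lines. On the circle the common squared norm works out to $2d+2\sqrt{d}$, so the common value of $|\langle\x_j,\x_k\rangle|/(||\x_j||\,||\x_k||)$ is $2\sqrt{d}/(2d+2\sqrt{d})=1/(\sqrt{d}+1)$, which is strictly less than $1$; hence no two of the vectors are parallel and they represent $d^2$ distinct equiangular lines in $\C^{2d}$. I expect no serious obstacle: the corollary follows directly from Lemma~\ref{LEM:2Lblockangles}, and the only points requiring care are the equal-norm verification (so that~(\ref{EQ:equiang2}) may be used) and the observation that the defining circle is not merely nonempty but infinite, which is what yields infinitely many admissible $(a,b)$.
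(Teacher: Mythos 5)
Your proof is correct and follows the same route as the paper, which likewise deduces the corollary immediately from Lemma~\ref{LEM:2Lblockangles} by noting that $2(b^2+(a-1)^2)=2\sqrt{d}$ has infinitely many real solutions $(a,b)$. Your additional checks (common norm, and the normalized angle $1/(1+\sqrt{d})<1$ guaranteeing distinct lines) are sound and consistent with the paper's remarks following the corollary.
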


Corollary~\ref{COR:Lblocks} follows from Lemma~\ref{LEM:2Lblockangles}, since for every dimension $d$ there are infinitely many choices of $a,b$ such that ${2}(b^2+(a-1)^2)=2\sqrt{d}$. Thus Corollary~\ref{COR:Lblocks} gives $d^2$ equiangular lines in $\C^{2d}$ whenever $d=p^r$ for some prime $p$, as we can construct blocks of the form $L_d^R(\pi,v)$ in these dimensions. So the list of dimensions for which one can construct $\Theta(d^2)$ equiangular lines in $\C^d$, which was previously known to include $d=3\cdot 2^{2t-1}-1$ \cite{deCaen}, and $d=p^r+1$ for $p$ prime \cite{Godsil-Roy, Konig}, can now be extended to include $d=2p^r$ for $p$ prime\footnote{A function $f$ from $\N$ to $\R^+$ is $\Theta(d^2)$ if there are positive constants $c$ and $C$, independent of $d$, for which $c d^2 \le f(d) \le C d^2$ for all sufficiently large~$d$.}. 

Notice that the angle between each pair of distinct lines in Corollary~\ref{COR:Lblocks} is $\frac{1}{1+\sqrt{d}}$ (as is easily checked by normalizing these vectors).
Using the special bounds calculated by Delsarte, Goethals and Seidel (\cite[Table I]{DGS-bounds} with $\alpha=\beta=\frac{1}{1+\sqrt{d}}$ and $n=2d$ over $\C$), we find that the function
\begin{equation*} 
f(d) = \frac{d(2d+1)(2\sqrt{d}+d)^2}{d^2+4d+2\sqrt{d}} \end{equation*}
is an upper bound on the number of vectors in $\C^{2d}$ having angle $\frac{1}{1+\sqrt{d}}$ between each pair of distinct vectors. In the range $d \ge 1$ we have $2d^2 < f(d) \le 4d^2$, and the larger value $4d^2$ is attained exactly at $d=4$ and the smaller value $2d^2$ is the asymptotic value of $f(d)$.
Since Corollary~\ref{COR:Lblocks} gives only $d^2$ equiangular lines with the specified angle $\frac{1}{1+\sqrt{d}}$, we ask the following questions:

\begin{question} 
Can we extend the set of $d^2$ equiangular lines in $\C^{2d}$ given in Corollary~$\ref{COR:Lblocks}$ by adding some or all of the vectors of additional blocks of the form $L_d^R(\pi,v)$ for suitable $\pi$ and $v$, and if so what is the largest possible number of resulting equiangular lines? \label{Q:LblockExtend}
\end{question}

\begin{question} For large $d$, can we achieve the asymptotic bound of $2d^2$ equiangular lines having angle $\frac{1}{1+\sqrt{d}}$ in $\C^{2d}$? \label{Q:LblockAsymp} \end{question}

The value $d=4$ in Question~\ref{Q:LblockExtend} is of special interest, because it is the only value of $d \ge 1$ for which $f(d) = 4d^2$ (so that there is a possibility of extending the $d^2$ equiangular lines in $\C^{2d}$ to a maximum-sized set of size $4d^2$. We can alternatively identify the candidate value $d=4$ by equating the specified angle $\frac{1}{1+\sqrt{d}}$ with the angle $\frac{1}{\sqrt{2d+1}}$ given by Proposition~\ref{PROP:angle}.)
It turns out that when $d=4$ we can indeed combine several blocks of the form $L_4^d(\pi,v)$ to form a set of 64 equiangular lines in $\C^8$. 

\begin{ex} Let $R$ be the RDS of Example~$\ref{EX:RDS4}$ and let $\pi=[1,3,4,2]$. The following is a set of $64$ equiangular lines in $\C^8$:
$${\renewcommand{\arraystretch}{1.1}
\begin{array}{@{[}r@{(\pi,\;}r@{\,)\, \hspace{5pt}}r@{(\pi,\; }r@{\,)\,]\hspace{10pt}[}r@{(\pi,\;}r@{\,)\, \hspace{5pt}}r@{(\pi,\; }r@{\,)\,]}}
L^R_4 & 2+i 	& L^R_4 & -i 
& L^R_4 & -i	& L^R_4 & 2+i  \\[2pt]
L^R_4 & -1+2i 	& -L^R_4 & 1 
& L^R_4& 1	& -L^R_4 &-1+2i
\end{array}
}$$
The lines are given explicitly by the following vectors:
 $$ \fontsize{9}{9.75}\selectfont
\begin {array}{@{(}*{4}{@{\!}c@{}}*{4}{@{\,}c@{\;}}@{\!)}} 2+i&1&1&1&-i&1&1&1
\\ \noalign{\medskip}2+i&1&-1&-1&-i&1&-1&-1\\ \noalign{\medskip}2+i&-1
&1&-1&-i&-1&1&-1\\ \noalign{\medskip}2+i&-1&-1&1&-i&-1&-1&1
\\ \noalign{\medskip}1&1&-1+2\,i&-i&1&1&1&-i\\ \noalign{\medskip}1&1&1
-2\,i&i&1&1&-1&i\\ \noalign{\medskip}1&-1&-1+2\,i&i&1&-1&1&i
\\ \noalign{\medskip}1&-1&1-2\,i&-i&1&-1&-1&-i\\ \noalign{\medskip}1&i
&1&1-2\,i&1&i&1&-1\\ \noalign{\medskip}1&i&-1&-1+2\,i&1&i&-1&1
\\ \noalign{\medskip}1&-i&1&-1+2\,i&1&-i&1&1\\ \noalign{\medskip}1&-i&
-1&1-2\,i&1&-i&-1&-1\\ \noalign{\medskip}1&-1+2\,i&i&-1&1&1&i&-1
\\ \noalign{\medskip}1&-1+2\,i&-i&1&1&1&-i&1\\ \noalign{\medskip}1&1-2
\,i&i&1&1&-1&i&1\\ \noalign{\medskip}1&1-2\,i&-i&-1&1&-1&-i&-1
\\[15pt]
-1+2\,i&1&1&1&-1&-1&-1&-1
\\ \noalign{\medskip}-1+2\,i&1&-1&-1&-1&-1&1&1\\ \noalign{\medskip}-1+
2\,i&-1&1&-1&-1&1&-1&1\\ \noalign{\medskip}-1+2\,i&-1&-1&1&-1&1&1&-1
\\ \noalign{\medskip}1&1&-2-i&-i&-1&-1&-i&i\\ \noalign{\medskip}1&1&2+
i&i&-1&-1&i&-i\\ \noalign{\medskip}1&-1&-2-i&i&-1&1&-i&-i
\\ \noalign{\medskip}1&-1&2+i&-i&-1&1&i&i\\ \noalign{\medskip}1&i&1&2+
i&-1&-i&-1&i\\ \noalign{\medskip}1&i&-1&-2-i&-1&-i&1&-i
\\ \noalign{\medskip}1&-i&1&-2-i&-1&i&-1&-i\\ \noalign{\medskip}1&-i&-
1&2+i&-1&i&1&i\\ \noalign{\medskip}1&-2-i&i&-1&-1&-i&-i&1
\\ \noalign{\medskip}1&-2-i&-i&1&-1&-i&i&-1\\ \noalign{\medskip}1&2+i&
i&1&-1&i&-i&-1\\ \noalign{\medskip}1&2+i&-i&-1&-1&i&i&1
\end {array} 
\hspace{12pt}
\begin {array}{@{(}*{4}{@{\,}c@{\;\,}}*{4}{@{\!}c@{}}@{)}}  -i&1&1&1&2+i&1&1&1
\\ \noalign{\medskip}-i&1&-1&-1&2+i&1&-1&-1\\ \noalign{\medskip}-i&-1&
1&-1&2+i&-1&1&-1\\ \noalign{\medskip}-i&-1&-1&1&2+i&-1&-1&1
\\ \noalign{\medskip}1&1&1&-i&1&1&-1+2\,i&-i\\ \noalign{\medskip}1&1&-
1&i&1&1&1-2\,i&i\\ \noalign{\medskip}1&-1&1&i&1&-1&-1+2\,i&i
\\ \noalign{\medskip}1&-1&-1&-i&1&-1&1-2\,i&-i\\ \noalign{\medskip}1&i
&1&-1&1&i&1&1-2\,i\\ \noalign{\medskip}1&i&-1&1&1&i&-1&-1+2\,i
\\ \noalign{\medskip}1&-i&1&1&1&-i&1&-1+2\,i\\ \noalign{\medskip}1&-i&
-1&-1&1&-i&-1&1-2\,i\\ \noalign{\medskip}1&1&i&-1&1&-1+2\,i&i&-1
\\ \noalign{\medskip}1&1&-i&1&1&-1+2\,i&-i&1\\ \noalign{\medskip}1&-1&
i&1&1&1-2\,i&i&1\\ \noalign{\medskip}1&-1&-i&-1&1&1-2\,i&-i&-1
\\[15pt]
1&1&1&1&1-2\,i&-1&-1&-1
\\ \noalign{\medskip}1&1&-1&-1&1-2\,i&-1&1&1\\ \noalign{\medskip}1&-1&
1&-1&1-2\,i&1&-1&1\\ \noalign{\medskip}1&-1&-1&1&1-2\,i&1&1&-1
\\ \noalign{\medskip}1&1&i&-i&-1&-1&2+i&i\\ \noalign{\medskip}1&1&-i&i
&-1&-1&-2-i&-i\\ \noalign{\medskip}1&-1&i&i&-1&1&2+i&-i
\\ \noalign{\medskip}1&-1&-i&-i&-1&1&-2-i&i\\ \noalign{\medskip}1&i&1&
-i&-1&-i&-1&-2-i\\ \noalign{\medskip}1&i&-1&i&-1&-i&1&2+i
\\ \noalign{\medskip}1&-i&1&i&-1&i&-1&2+i\\ \noalign{\medskip}1&-i&-1&
-i&-1&i&1&-2-i\\ \noalign{\medskip}1&i&i&-1&-1&2+i&-i&1
\\ \noalign{\medskip}1&i&-i&1&-1&2+i&i&-1\\ \noalign{\medskip}1&-i&i&1
&-1&-2-i&-i&-1\\ \noalign{\medskip}1&-i&-i&-1&-1&-2-i&i&1
\end {array}
$$
Notice that this is also an example of almost flat equiangular lines. \label{EX:LblockLines}
\end{ex}

It is not the case that for every choice of permutation $\pi$, the set $[L_4^R(\pi,a+ib)\;L_4^R(\pi,2-a-ib)]$ can be extended to 64 equiangular lines in $\C^8$. In fact, the ability to extend the set of Lemma~\ref{LEM:2Lblockangles} to a set of maximum size is highly sensitive to the choice of additional blocks. Notice also that in Example~\ref{EX:LblockLines}, the lower blocks of vectors do not follow the exact structure of Lemma~\ref{LEM:2Lblockangles}, but instead are of the form $[L_4^R(\pi,i(a+ib))\;-L_4^R(\pi,i(2-a-ib))]$.
These subtle differences indicate that answering Question~\ref{Q:LblockExtend} (for $d > 4$) and Question~\ref{Q:LblockAsymp} might involve careful parameter choices as well as small variations in the form of the blocks. 
This discussion also motivates a final question:

\begin{question} 
Is there a variant of the block construction of Lemma~$\ref{LEM:2Lblockangles}$ in $\C^d$ from which we can construct $d^2$ equiangular lines in $\C^D$ with angle $\frac{1}{\sqrt{D+1}}$ for some $D > d$ (so that the resulting lines have the correct angle required for a maximum-sized set of size $D^2$)? 
\label{Q:AlterLblocks} \end{question}

\section{Conclusion} \label{S:conclusion}

We have seen that MUBs and sets of equiangular lines are more deeply intertwined than previously recognized. We believe that the new constructions presented here, and the questions posed, open new avenues for exploring the existence of maximum-sized sets of equiangular lines.

\bibliographystyle{plain}
\bibliography{mubs-lines}

\end{document}